\documentclass[a4paper,11pt]{elsarticle}
\usepackage[utf8]{inputenc}
\usepackage[T1]{fontenc}
\usepackage{amsmath,amssymb,amsthm}
\usepackage{geometry}
\usepackage{hyperref}

\newtheorem{theorem}{Theorem}[section]
\newtheorem{lemma}[theorem]{Lemma}
\newtheorem{corollary}[theorem]{Corollary}

\theoremstyle{definition}
\newtheorem{definition}[theorem]{Definition}
\newtheorem{example}[theorem]{Example}
\theoremstyle{remark}

\DeclareMathOperator{\End}{End}

\DeclareMathOperator{\im}{Im}
\newcommand{\m}{\mathfrak{m}}

\begin{document}

\begin{frontmatter}
\title{Rota--Baxter Operators on Truncated Polynomial Algebras}

\author{Azhar Farooq }
\address{Abdus Salam School of Mathematical Sciences,\\ GC University, Lahore-Pakistan\\e-mail:
			azhar.farooq@sms.edu.pk
		}

\begin{abstract}
Let $K$ be a field of characteristic zero and let  $\m = (x_1,\dots,x_n)$ be a maximal ideal of the polynomial ring $R = K[x_1,\dots,x_n]$. We classify all Rota--Baxter operators of weight zero and weight one on the truncated polynomial algebra $R = K[x_1,\dots,x_n]/\m^2$.
For weight zero we show that such operators are exactly the linear maps $P$ satisfying $P^2 = 0$ and $\im(P) \subseteq \m/\m^2$.
For non‑zero weight a standard rescaling reduces the problem to weight one.
Weight one operators split into two disjoint families determined by the scalar $P(1) \in \{0, -1\}$; on the maximal ideal they act as an endomorphism $L$ with $L^2 + L = 0$, i.e. $-L$ is an idempotent.
Each family is isomorphic to the variety of idempotent matrices.
\end{abstract}

\begin{keyword}
Rota-Baxter algebra, Rota-Baxter operator, Polynomial ring, Truncated polynomial algebra.

       {\it{ 2020 Mathematics Subject Classification:}} 11R52; 15A99; 17B20.
\end{keyword}
\end{frontmatter}

\section{Introduction}

Rota--Baxter operators were first introduced by Glen Baxter in the context of probability theory \cite{Baxter} and later studied by Gian-Carlo Rota and others as algebraic tools in combinatorics and mathematical physics \cite{Rota}.  Recall that for an associative $K$-algebra $A$ and a scalar $\lambda\in K$, a {\em Rota--Baxter operator of weight $\lambda$} is a $K$-linear map $P:A\to A$ satisfying the identity 
\begin{equation}\label{RBdef}
    P(x)P(y) \;=\; P\bigl(xP(y)+P(x)y + \lambda\,xy\bigr)
\end{equation}
for all $x,y\in A$.  Classic examples include the integral operator (of weight~0) on continuous functions, whose integration-by-parts formula encodes the Rota--Baxter relation.  The case $\lambda=0$ and $\lambda=1$ are of special interest, since any nonzero weight can be rescaled to~1 \cite{Guo}.  Rota--Baxter algebras have rich connections to combinatorics and physics: for instance, the classical Spitzer identity in probability theory is naturally derived using a Rota--Baxter structure, and the Connes--Kreimer theory of renormalization in perturbative quantum field theory revealed that the decomposition of Feynman graphs is governed by a Rota--Baxter operator \cite{ConnesKreimer2000,ConnesKreimer1998,EG2007}.  There are also links to dendriform and shuffle algebras \cite{EG2008,Aguiar2000,GuoKeigher2000}, as well as to solutions of the classical Yang--Baxter equation.

The structure and classification of Rota--Baxter operators have been extensively studied on various algebras (see \cite{Guo} for a survey).  In particular, one encounters Rota--Baxter operators on polynomial algebras, free (noncommutative) algebras, and matrix or semigroup algebras, among others \cite{ZhengGuoRosenkranz2015,GuoZheng2021,Cartier1972,ZhengGuo2019,Gubarev2018,Tang2021,Gubarev2025}.  In this paper we initiate the study of Rota--Baxter operators on {\em truncated polynomial algebras}.  Concretely, let 
\[
R \;=\; K[x_1,\dots,x_n]/\m^2,\qquad \m=(x_1,\dots,x_n),
\]
where $K$ is a field of characteristic zero.  Then $R$ decomposes as $K\oplus V$ with $V=\m/\m^2\cong K^n$ and $V^2=0$.  Algebras of the form $K\oplus V$ with $V^2=0$ play a fundamental role in deformation theory and geometry, encoding first-order neighbourhoods of points.  We classify all Rota--Baxter operators of weight $0$ and $1$ on $R$.

Our main results can be summarized as follows:
\begin{itemize}
\item {\bf Weight 0.}  We prove that a $K$-linear map $P:R\to R$ is a Rota--Baxter operator of weight $0$ if and only if $P^2=0$ and $\im(P)\subseteq V$ (Theorem~\ref{thm:wt0}).  Equivalently, choosing the basis $(1,\bar x_1,\dots,\bar x_n)$ of $R$, the matrix of $P$ has the block form 
\[
\begin{pmatrix} 0 & 0 \\ v_0 & L \end{pmatrix},
\]
where $v_0\in V$ and $L\in\End_K(V)$ satisfy $L^2=0$ and $L\,v_0=0$.  In other words, $P$ projects into the nilpotent ideal $V$ and is itself nilpotent.

\item {\bf Weight 1.}  For a nonzero weight operator we rescale to $\lambda=1$ (by Lemma~\ref{lem:rescale}).  We show that any Rota--Baxter operator $P$ of weight $1$ on $R$ must satisfy $P(1)\in\{0,-1\}$ (Theorem~\ref{thm:wt1}).  Moreover, in either case $P$ induces an endomorphism $L=P|_V$ on $V$ satisfying $L^2+L=0$ (so $-L$ is idempotent).  Conversely, given any scalar $\lambda\in\{0,-1\}$ and any idempotent $Q\in \End_K(V)$, the formula
\[
P(a+u)\;=\;\lambda\,a \;-\;Q(u)\qquad (a\in K,\ u\in V)
\]
defines a Rota--Baxter operator of weight~1 on $R$.  Thus the set of weight~1 operators splits into two disjoint families (depending on $\lambda=P(1)$), each parametrized by idempotent matrices.  In particular, the variety of weight-1 operators is isomorphic to the disjoint union of two copies of the variety of idempotent $n\times n$ matrices (Corollary~\ref{cor:wt1geo}).

\end{itemize}

The remainder of the paper is organized as follows.  In Section~\ref{sec:prelim} we review basic properties of the truncated polynomial algebra $R$.  Section~\ref{sec:wt0} contains the proof of the weight~0 classification, and Section~\ref{sec:wt1} treats the weight~1 case. Our arguments are essentially elementary linear algebra once one writes linear maps on $R$ in the basis $(1,\bar x_1,\dots,\bar x_n)$.

\section{Preliminaries}\label{sec:prelim}

Throughout, $K$ is a field of characteristic zero.
For $n \ge 1$ we set $S = K[x_1,\dots,x_n]$, $\m = (x_1,\dots,x_n)$, and
\[
R = S / \m^2.
\]
Denote by $\bar x_i$ the image of $x_i$ in $R$.  Every element of $R$ can be written uniquely as
\[
a + \sum_{i=1}^n b_i \bar x_i, \qquad a, b_i \in K.
\]
Thus $R = K \oplus V$ where $V = \m/\m^2$ is the $n$-dimensional vector space with basis $\bar x_1,\dots,\bar x_n$.  Multiplication satisfies
\[
(a+u)(b+v) = ab + av + bu, \qquad a,b \in K,\; u,v \in V,
\]
so $V \cdot V = 0$. We identify $K$ with the subring of scalar multiples of the unit $1$.

For a $K$-vector space $W$, $\End_K(W)$ denotes the algebra of $K$-linear endomorphisms of $W$.
A projection on $V$ is an idempotent $Q \in \End_K(V)$, i.e. $Q^2 = Q$.

\section{Weight zero Rota--Baxter operators}\label{sec:wt0}

In this section we classify all Rota--Baxter operators of weight $\lambda = 0$ on $R$.

\begin{definition}
A $K$-linear map $P : R \to R$ is a \emph{Rota--Baxter operator of weight zero} if
\begin{equation}\label{RB0}
P(x)P(y) = P\bigl(x P(y) + P(x) y\bigr) \qquad \forall x,y \in R.
\end{equation}
\end{definition}

\begin{lemma}\label{lem:im_in_V}
If $P$ satisfies \eqref{RB0}, then $\im(P) \subseteq V$.  In particular $P(1) \in V$ and $P(V) \subseteq V$.
\end{lemma}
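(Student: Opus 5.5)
Write $\pi:R\to K$ for the projection onto the scalar component along $V$. Since $V\cdot V=0$, $\pi$ is a $K$-algebra homomorphism, i.e. $\pi(xy)=\pi(x)\pi(y)$. The plan is to apply $\pi$ to \eqref{RB0} and then test it on suitable elements. Define the linear functional $f=\pi\circ P:R\to K$. Applying $\pi$ to \eqref{RB0} gives, for all $x,y\in R$,
\[
f(x)f(y)=f\bigl(xP(y)+P(x)y\bigr).
\]
We must show $f=0$.

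\textbf{Step 1: $f(1)=0$.} Take $x=y=1$ and set $c=f(1)$. The identity gives $c^2=f(2P(1))=2c$, so $c\in\{0,2\}$. To rule out $c=2$ we need more information. Write $P(1)=c+w$ with $w\in V$, and use the full equation \eqref{RB0} at $x=y=1$, not just its scalar part:
\[
P(1)^2=2P(P(1)).
\]
The left side is $c^2+2cw$. The right side is $2P(c+w)=2cP(1)+2P(w)=2c^2+2cw+2P(w)$. Comparing gives $P(w)=-c^2/2$, which is a scalar. If $c=2$, then $P(w)=-2$, and in particular $f(w)=-2$.

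Now take $x=1$, $y=w$ in the scalar identity:
\[
f(1)f(w)=f\bigl(P(w)+P(1)w\bigr)=f(-2+2w)=-2c+2f(w).
\]
With $c=2$ and $f(w)=-2$, the left side is $-4$ and the right side is $-4-4=-8$, a contradiction. Hence $c=0$, so $P(1)\in V$. I expect this exclusion of $c=2$ to be the main obstacle. It is the one place where the vector part of \eqref{RB0} must be used, and if the computation above gets tangled, the fallback is to test \eqref{RB0} at $(x,y)=(1,w)$ and compare $V$-components.

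\textbf{Step 2: $f(u)=0$ for $u\in V$.} Take $x=1$, $y=u\in V$ in the scalar identity and use $f(1)=0$:
\[
0=f\bigl(P(u)+P(1)u\bigr)=f(P(u)),
\]
since $P(1)u\in V\cdot V=0$. Next take $x=y=u$. Because $uP(u)\in V\cdot R$ and $\pi(uP(u))=0$, write $P(u)=f(u)+v$ with $v\in V$. Then $uP(u)=f(u)u$, and the identity becomes
\[
f(u)^2=f\bigl(2f(u)u\bigr)=2f(u)^2.
\]
Hence $f(u)^2=0$, so $f(u)=0$.

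Combining the two steps, $f=0$ on $K\oplus V=R$, i.e. $\im(P)\subseteq V$. The statements $P(1)\in V$ and $P(V)\subseteq V$ follow immediately. The characteristic-zero assumption (in fact only $2\neq0$) is used when dividing by $2$ in Step 1.
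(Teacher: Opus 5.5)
Step 1 rests on a false equality. With $f=\pi\circ P$, the scalar identity at $x=y=1$ reads
$c^2=f(2P(1))=2\,\pi\bigl(P(P(1))\bigr)$, not $2c$. You evaluated $\pi$ on $P(1)$ instead of $f=\pi\circ P$ on $P(1)$. Writing $P(1)=c+w$ gives $f(P(1))=c^2+f(w)$, so this equation only says $f(w)=-c^2/2$. That is already contained in the full identity $P(w)=-c^2/2$, which you derive next. So the dichotomy $c\in\{0,2\}$ is unjustified. Your (numerically correct) exclusion of $c=2$ leaves every other nonzero value of $c$ untreated, and as written Step 1 does not prove $P(1)\in V$.

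The repair is immediate, because your test at $(x,y)=(1,w)$ works for arbitrary $c$. Use $P(w)=f(w)=-c^2/2$ and $P(1)w=cw$. The left side is $f(1)f(w)=-c^3/2$. The right side is $f\bigl(-\tfrac{c^2}{2}+cw\bigr)=-\tfrac{c^3}{2}+c\,f(w)=-c^3$. Hence $c^3=0$, so $c=0$. With this change Step 1 is exactly the paper's argument: the paper tests $x=v_0$, $y=1$ and gets $\alpha\,P(v_0)=0$ with $P(v_0)=-\alpha^2/2$.

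Step 2 is correct and matches the paper's computation at $x=y=u$. Its first half, which derives $f(P(u))=0$, is never used and can be deleted. Step 2 also does not depend on Step 1 at all.
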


\begin{proof}
Write $P(1) = \alpha + v_0$ with $\alpha \in K$, $v_0 \in V$.
Using $x = y = 1$ in \eqref{RB0}:
\[
P(1)^2 = P(2P(1)) = 2P(P(1)).
\]
Expanding the left side gives $\alpha^2 + 2\alpha v_0$, while the right side is $2\alpha P(1) + 2P(v_0) = 2\alpha^2 + 2\alpha v_0 + 2P(v_0)$.
Cancelling $2\alpha v_0$ we obtain
\begin{equation}\label{eq3}
    \alpha^2 = 2\alpha^2 + 2P(v_0) \quad\Longrightarrow\quad 0 = \alpha^2 + 2P(v_0).
\end{equation}
Thus $2P(v_0) = -\alpha^2 \in K$, so $P(v_0)$ is a scalar.

Now use $x = v_0,\; y = 1$:
\[
P(v_0)P(1) = P\bigl(v_0 P(1) + P(v_0)\bigr).
\]
Since $P(v_0) = c \in K$, the left side is $c(\alpha + v_0) = c\alpha + c v_0$.
On the right, $v_0 P(1) + c = v_0(\alpha + v_0) + c = \alpha v_0 + c$ because $v_0^2=0$.
Applying $P$ yields $\alpha P(v_0) + P(c) = \alpha c + c P(1) = \alpha c + c(\alpha + v_0) = 2\alpha c + c v_0$.
Equating the two sides gives $c\alpha + c v_0 = 2\alpha c + c v_0$, hence $c\alpha = 2\alpha c$, i.e. $\alpha c = 0$ in $K$.
So either $\alpha = 0$ or $c = 0$.  If $c = 0$, then from $c = -\alpha^2/2$ (from \ref{eq3} after dividing by $2$) we obtain $\alpha^2 = 0$, hence $\alpha = 0$ anyway.
Thus $\alpha = 0$ and then \ref{eq3} gives $2P(v_0) = 0$, so $P(v_0) = 0$.
Therefore $P(1) = v_0 \in V$ and $P(P(1)) = 0$.

Now for any $u,v \in V$, write $P(u) = \beta + u'$, $P(v) = \gamma + v'$ with $\beta,\gamma \in K$, $u',v' \in V$.
Using \eqref{RB0} with $x = u, y = v$:
\[
P(u)P(v) = \beta\gamma + \beta v' + \gamma u',
\]
while the right side is $P\bigl(u P(v) + P(u) v\bigr) = P(\beta v + \gamma u) = \beta P(v) + \gamma P(u) = 2\beta\gamma + \beta v' + \gamma u'$.
Hence $\beta\gamma = 2\beta\gamma$, so $\beta\gamma = 0$.  Taking $u = v$ gives $\beta^2 = 0$, so $\beta = 0$.
Thus $P(V) \subseteq V$.
Since every $x \in R$ is $a + u$ with $a \in K$, $u \in V$, we have $P(x) = aP(1) + P(u) \in V$.
\end{proof}

\begin{lemma}\label{lem:P2zero}
If $P$ satisfies \eqref{RB0}, then $P^2 = 0$.
\end{lemma}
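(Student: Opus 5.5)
We deduce $P^2=0$ directly from Lemma~\ref{lem:im_in_V} by a single substitution into \eqref{RB0}. By that lemma, $\im(P)\subseteq V$, and since $V\cdot V=0$ we get $P(x)P(y)=0$ for all $x,y\in R$. So the left side of \eqref{RB0} vanishes identically, and every Rota--Baxter operator of weight zero satisfies
\[
P\bigl(xP(y)+P(x)y\bigr)=0 \qquad \forall x,y\in R.
\]

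The key step is to choose $x=1$. Then $xP(y)=P(y)$ and $P(x)y=P(1)y$, so the identity becomes
\[
P(P(y)) + P\bigl(P(1)y\bigr) = 0 .
\]
We treat the second term by decomposing $y$. Write $y=a+u$ with $a\in K$, $u\in V$. Since $P(1)\in V$ and $V\cdot V=0$, we have $P(1)y=aP(1)$, and hence $P(P(1)y)=aP(P(1))$.

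It remains to see that $P(P(1))=0$. This was already established in the proof of Lemma~\ref{lem:im_in_V}. Alternatively, put $x=y=1$ in the displayed identity above: it gives $P(2P(1))=0$, so $P(P(1))=0$ because $\operatorname{char}K=0$. Therefore $P(P(y))=-aP(P(1))=0$ for every $y$, that is, $P^2=0$.

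There is no genuine obstacle here. The only point needing care is that $P(1)y$ need not vanish, since $y$ has a scalar part. That term is disposed of using $P(P(1))=0$, which is why we handle the case $y=1$ first.
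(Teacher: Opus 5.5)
Your proof is correct and is essentially the paper's argument. The paper substitutes $x=v\in V$, $y=1$, which is the same identity as your $x=1$, $y=u$ by commutativity, and it takes $P(P(1))=0$ from the proof of Lemma~\ref{lem:im_in_V}; you handle a general $y=a+u$ in one step and also rederive $P(P(1))=0$ from $x=y=1$ once $\im(P)\subseteq V$ is known.
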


\begin{proof}
We already know $P(P(1)) = 0$ from the previous proof.  For any $v \in V$, use $x = v$, $y = 1$ in \eqref{RB0}:
\[
P(v)P(1) = P\bigl(v P(1) + P(v)\bigr).
\]
Since $P(1), P(v) \in V$ and $V^2 = 0$, the left side is $0$.  On the right, $v P(1) = 0$, so the argument of $P$ is just $P(v)$.  Hence $0 = P(P(v))$, so $P^2(v) = 0$.  Together with $P^2(1) = 0$, linearity gives $P^2 = 0$.
\end{proof}

\begin{theorem}\label{thm:wt0}
Let $R = S/\m^2$ be as above.  A $K$-linear map $P : R \to R$ is a Rota--Baxter operator of weight zero if and only if $\im(P) \subseteq V$ and $P^2 = 0$.
\end{theorem}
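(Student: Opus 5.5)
The forward direction needs nothing new: if $P$ satisfies \eqref{RB0}, then Lemma~\ref{lem:im_in_V} gives $\im(P)\subseteq V$ and Lemma~\ref{lem:P2zero} gives $P^2=0$. All the work is in the converse. There we assume $\im(P)\subseteq V$ and $P^2=0$, and check \eqref{RB0} directly for all $x,y\in R$.

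For the converse, the plan is to show that both sides of \eqref{RB0} vanish identically. The left side is $P(x)P(y)$, a product of two elements of $V$, so it is $0$ because $V\cdot V=0$. For the right side, write $x=a+u$ and $y=b+v$ with $a,b\in K$ and $u,v\in V$. Since $P(y)\in V$, the multiplication rule gives $xP(y)=aP(y)+uP(y)=aP(y)$, and likewise $P(x)y=bP(x)$. So the argument of $P$ on the right is $aP(y)+bP(x)$. Applying $P$ and using linearity gives $aP^2(y)+bP^2(x)=0$. Both sides are zero, so \eqref{RB0} holds.

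I do not expect a real obstacle. The only point to handle carefully is that $P(x)y$ collapses to a scalar multiple of $P(x)$, which uses $V^2=0$ together with $\im(P)\subseteq V$. One can also note the equivalent block-matrix description from the introduction: $\im(P)\subseteq V$ means the first row of the matrix vanishes, and $P^2=0$ then translates into $L^2=0$ and $Lv_0=0$.
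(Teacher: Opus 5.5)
Your proposal is correct and follows the paper's proof essentially verbatim. Necessity comes from Lemmas~\ref{lem:im_in_V} and~\ref{lem:P2zero}. For sufficiency, both sides of \eqref{RB0} vanish: $V\cdot V=0$ kills $P(x)P(y)$ and reduces $xP(y)+P(x)y$ to $aP(y)+bP(x)$, which $P^2=0$ then sends to zero.
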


\begin{proof}
Necessity is Lemmas~\ref{lem:im_in_V} and~\ref{lem:P2zero}.
For sufficiency, assume $\im(P) \subseteq V$ and $P^2 = 0$.  For any $x,y \in R$, write them as $x = a+u$, $y = b+v$.  Then $P(x), P(y) \in V$, so their product is $0$.
On the other hand,
\[
x P(y) + P(x) y = (a+u)P(y) + P(x)(b+v) = a P(y) + b P(x)
\]
because $u P(y) = P(x) v = 0$ (product in $V$).
Applying $P$ gives $a P^2(y) + b P^2(x) = 0$.  Hence the Rota--Baxter identity holds.
\end{proof}

\subsection{Matrix form and parameterisation}
With respect to the basis $1, \bar x_1,\dots,\bar x_n$ of $R$, any linear map $P$ with $\im(P) \subseteq V$ has a matrix of the form
\begin{equation}\label{Pmatrix}
P = \begin{pmatrix}
0 & 0 \\[2pt]
v_0 & L
\end{pmatrix},
\end{equation}
where $v_0 \in K^n$ is the column vector $(p_{10},\dots,p_{n0})^T$ and $L = (p_{ij})_{1 \le i,j \le n} \in M_n(K)$.
The condition $P^2 = 0$ translates to
\[
L^2 = 0 \qquad\text{and}\qquad L v_0 = 0.
\]
Thus the set of weight‑zero Rota--Baxter operators is in bijection with
\[
\mathcal{RB}_0 = \{(v_0, L) \in V \times \End_K(V) \mid L^2 = 0,\; L v_0 = 0\}.
\]

\begin{example}\label{ex:n3}
For $n=3$, the matrix of a weight‑zero Rota--Baxter operator is
\[
\begin{pmatrix}
0 & 0 & 0 & 0\\
a_1 & \ell_{11} & \ell_{12} & \ell_{13}\\
a_2 & \ell_{21} & \ell_{22} & \ell_{23}\\
a_3 & \ell_{31} & \ell_{32} & \ell_{33}
\end{pmatrix}
\]
with $L^2 = 0$ and $L (a_1,a_2,a_3)^T = 0$.
\end{example}

\section{Weight one Rota--Baxter operators}\label{sec:wt1}

For a Rota--Baxter operator with $\lambda \neq 0$, the following simple rescaling recovers the weight‑one case.

\begin{lemma}\cite{Guo}\label{lem:rescale}
Let $P$ be a Rota--Baxter operator of weight $\lambda \neq 0$ on an algebra $A$.  Then $Q = \lambda^{-1} P$ is a Rota--Baxter operator of weight $1$.
\end{lemma}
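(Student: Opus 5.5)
The plan is to substitute $Q=\lambda^{-1}P$ directly into the weight-one identity \eqref{RBdef} and reduce it to the weight-$\lambda$ identity for $P$ by pulling scalars out using linearity. Since $\lambda\neq 0$ lies in the field $K$, $\lambda^{-1}$ exists. The map $Q$ is $K$-linear because it is a scalar multiple of the linear map $P$, so only the Rota--Baxter identity needs checking.

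Fix $x,y\in A$. For the left-hand side, bilinearity of multiplication in the $K$-algebra $A$ gives
\[
Q(x)Q(y)=\lambda^{-2}P(x)P(y).
\]
For the right-hand side of the weight-one identity, linearity of $P$ gives
\[
Q\bigl(xQ(y)+Q(x)y+xy\bigr)=\lambda^{-1}P\bigl(\lambda^{-1}xP(y)+\lambda^{-1}P(x)y+xy\bigr)=\lambda^{-2}P\bigl(xP(y)+P(x)y+\lambda xy\bigr).
\]
In the last step we factor $\lambda^{-1}$ out of the argument, which turns $xy$ into $\lambda\,xy$. By the weight-$\lambda$ identity for $P$, this equals $\lambda^{-2}P(x)P(y)$. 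The two sides therefore agree, and $Q$ is a Rota--Baxter operator of weight $1$.

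There is no genuine obstacle here. The only point needing care is the bookkeeping of the factor $\lambda^{-1}$ inside the argument: it must appear as $\lambda\,xy$ after factoring, which is exactly what matches the weight-$\lambda$ relation. The same computation run backwards shows that $P\mapsto\lambda^{-1}P$ is a bijection between weight-$\lambda$ and weight-$1$ operators. This is the sense in which the rest of the section may restrict to weight one.
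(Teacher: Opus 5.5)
Your proof is correct: substituting $Q=\lambda^{-1}P$ and factoring $\lambda^{-1}$ out of the argument so that $xy$ becomes $\lambda\,xy$ is the standard verification, and your remark that $Q\mapsto \lambda Q$ inverts the correspondence is also right. The paper gives no proof of its own and simply cites Guo, where the result is established by this same direct computation.
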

Hence the classification of Rota--Baxter operators of any non‑zero weight immediately follows from that of weight one.
We now classify Rota--Baxter operators of weight $\lambda = 1$ on $R$.

\begin{lemma}\label{lem:wt1_form}
If $P : R \to R$ is a Rota--Baxter operator of weight one, then $P|_V$ maps $V$ to $V$, and  $P(1) \in \{0, -1\}$.  Moreover, for all $u \in V$, $P(u) = L(u)$ with $L \in \End_K(V)$ satisfying $L^2 + L = 0$.
\end{lemma}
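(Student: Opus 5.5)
The plan is to imitate the proof of Lemma~\ref{lem:im_in_V}: substitute special pairs $(x,y)$ into the weight-one identity
\[
P(x)P(y)=P\bigl(xP(y)+P(x)y+xy\bigr)
\]
and compare the $K$- and $V$-components, using $V\cdot V=0$ throughout. I will write $P(1)=\alpha+v_0$ with $\alpha\in K$, $v_0\in V$. For $u\in V$ I will write $P(u)=\beta(u)+L(u)$, where $\beta\in V^*$ and $L\in\End_K(V)$. The steps are carried out in the following order.

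\emph{Step 1: $P(V)\subseteq V$.} Take $x=u$, $y=v$ in $V$. The term $xy=uv$ vanishes, and so do the cross terms $u\,L(v)$ and $L(u)\,v$. The right side therefore becomes $P(\beta(v)u+\beta(u)v)=2\beta(u)\beta(v)+\beta(v)L(u)+\beta(u)L(v)$. The left side is $\beta(u)\beta(v)+\beta(u)L(v)+\beta(v)L(u)$. Comparing gives $\beta(u)\beta(v)=0$, and taking $u=v$ gives $\beta=0$. Hence $P(u)=L(u)\in V$, exactly as in the weight-zero case.

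\emph{Step 2: constraints from $x=y=1$.} The left side is $(\alpha+v_0)^2=\alpha^2+2\alpha v_0$. The right side is $P(2P(1)+1)=(2\alpha+1)(\alpha+v_0)+2L(v_0)$. Comparing scalar parts gives $\alpha^2+\alpha=0$, so $\alpha\in\{0,-1\}$. Here Step~1 is used, since $L(v_0)\in V$. Comparing $V$-parts gives $L(v_0)=-\tfrac12 v_0$.

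\emph{Step 3: the relation $L^2+L=0$.} Take $x=u\in V$ and $y=1$. The left side is $L(u)(\alpha+v_0)=\alpha L(u)$. The argument on the right is $u(\alpha+v_0)+L(u)+u=(\alpha+1)u+L(u)$, so the right side is $(\alpha+1)L(u)+L^2(u)$. Comparing yields $L^2+L=0$.

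\emph{Step 4: $v_0=0$.} This is the step I expect to need the most care, because $P(1)$ could a priori have a $V$-component. The resolution is to combine Steps~2 and~3. From $L(v_0)=-\tfrac12 v_0$ we get $L^2(v_0)=\tfrac14 v_0$. Then $L^2+L=0$ gives $0=(L^2+L)(v_0)=-\tfrac14 v_0$, so $v_0=0$ since $\operatorname{char}K=0$. In other words, $-\tfrac12$ cannot be an eigenvalue of $L$, because $L^2+L=0$ forces all eigenvalues to lie in $\{0,-1\}$. Consequently $P(1)=\alpha\in\{0,-1\}$, $P|_V=L$ maps $V$ into $V$, and $L^2+L=0$, which is the full statement.
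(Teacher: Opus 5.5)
Your proof is correct and follows the paper's argument step for step. You use the same substitutions in the same order: $x,y\in V$ to kill the scalar part of $P|_V$, then $x=y=1$ to get $\alpha(\alpha+1)=0$ and $L(v_0)=-\tfrac12 v_0$, then $x=u$, $y=1$ to get $L^2+L=0$, and finally you combine the last two to force $v_0=0$ (your eigenvalue remark is just a gloss on that final computation).
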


\begin{proof}
Write $P(1) = \alpha + v_0$ with $\alpha \in K$, $v_0 \in V$, and for $u \in V$ write $P(u) = f(u) + L(u)$ where $f : V \to K$ is linear and $L \in \End_K(V)$.

First take $x, y \in V$.  The weight‑one identity \eqref{RBdef} with $\lambda=1$ gives
\[
P(x)P(y) = P\bigl(x P(y) + P(x) y + xy\bigr).
\]
Since $V^2=0$, we have $xy=0$, $x L(y)=0$, and $L(x) y=0$.  Then
\begin{align*}
\text{LHS} &= (f(x)+L(x))(f(y)+L(y)) = f(x)f(y) + f(x)L(y) + f(y)L(x),\\
\text{RHS} &= P\bigl(f(y)x + f(x)y\bigr) = f(y)P(x) + f(x)P(y)\\
 &= 2f(x)f(y) + f(y)L(x) + f(x)L(y).
\end{align*}
Equating scalar parts yields $f(x)f(y) = 2f(x)f(y)$, hence $f(x)f(y)=0$ for all $x,y$.  A non‑zero linear functional would have $f(u_0)=1$ for some $u_0$, then $f(u_0)^2=1$, contradiction.  Thus $f=0$ and $P(V) \subseteq V$.

Now set $x = y = 1$ in \eqref{RBdef}:
\[
P(1)^2 = P(2P(1) + 1).
\]
Using $P(1)=\alpha+v_0$ and $P|_V = L$,
\begin{align*}
\text{LHS} &= \alpha^2 + 2\alpha v_0,\\
\text{RHS} &= P(2\alpha + 2v_0 + 1) = (2\alpha+1)P(1) + 2P(v_0) \\
&= (2\alpha+1)(\alpha+v_0) + 2L(v_0) = 2\alpha^2+\alpha + (2\alpha+1)v_0 + 2L(v_0).
\end{align*}
Comparing components gives
\begin{align*}
\alpha^2 &= 2\alpha^2 + \alpha \;\Longrightarrow\; \alpha(\alpha+1)=0,\\
2\alpha v_0 &= (2\alpha+1)v_0 + 2L(v_0) \;\Longrightarrow\; L(v_0) = -\tfrac12 v_0. \tag{B}
\end{align*}
So $\alpha \in \{0, -1\}$.

Next take $x = u \in V$, $y = 1$:
\[
P(u)P(1) = P\bigl(u P(1) + P(u) + u\bigr).
\]
Compute:
\begin{align*}
\text{LHS} &= L(u)(\alpha+v_0) = \alpha L(u),\\
\text{RHS} &= P\bigl(u(\alpha+v_0) + L(u) + u\bigr) = P\bigl(\alpha u + L(u) + u\bigr)\\
&= (\alpha+1)L(u) + L^2(u).
\end{align*}
Hence $\alpha L(u) = (\alpha+1)L(u) + L^2(u)$, i.e. $L^2 + L = 0$ on $V$.

Finally, apply $L$ to Equation (B):
\[
L^2(v_0) = -\tfrac12 L(v_0) = \tfrac14 v_0.
\]
But $L^2 = -L$, so $L^2(v_0) = -L(v_0) = \tfrac12 v_0$.  Thus $\tfrac14 v_0 = \tfrac12 v_0$, forcing $v_0=0$.  Hence $P(1) = \alpha \in \{0,-1\}$.
\end{proof}

\begin{theorem}\label{thm:wt1}
A $K$-linear map $P : R \to R$ is a Rota--Baxter operator of weight one if and only if there exist $\alpha \in \{0,-1\}$ and an idempotent $Q \in \End_K(V)$ (i.e. $Q^2 = Q$) such that
\[
P(a+u) = \alpha a - Q(u) \qquad \forall a \in K,\; u \in V.
\]
\end{theorem}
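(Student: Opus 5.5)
The proof splits into necessity and sufficiency, and nearly all the work for necessity is already done in Lemma~\ref{lem:wt1_form}.

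\emph{Necessity.} Let $P$ be a Rota--Baxter operator of weight one. Lemma~\ref{lem:wt1_form} gives $P(1)=\alpha\in\{0,-1\}$ and $P|_V=L\in\End_K(V)$ with $L^2+L=0$. We set $Q=-L$. Then $Q^2=L^2=-L=Q$, so $Q$ is idempotent. By linearity,
\[
P(a+u)=aP(1)+P(u)=\alpha a-Q(u).
\]
This is exactly the required form.

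\emph{Sufficiency.} Fix $\alpha\in\{0,-1\}$ and an idempotent $Q$, and define $P(a+u)=\alpha a-Q(u)$. Take $x=a+u$ and $y=b+v$, and write $u'=-Q(u)$ and $v'=-Q(v)$, so $P(x)=\alpha a+u'$ and $P(y)=\alpha b+v'$. The plan is to compute both sides of \eqref{RBdef} with $\lambda=1$, using $V^2=0$ throughout.

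For the left side,
\[
P(x)P(y)=\alpha^2ab+\alpha a v'+\alpha b u'.
\]
For the argument of $P$ on the right side:
\begin{itemize}
\item $xP(y)=\alpha ab+\alpha b u+a v'$;
\item $P(x)y=\alpha ab+\alpha a v+b u'$;
\item $xy=ab+av+bu$.
\end{itemize}
The sum has scalar part $(2\alpha+1)ab$. Its $V$-part is $(\alpha+1)(bu+av)+av'+bu'$.

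Now apply $P$. The scalar part contributes $(2\alpha^2+\alpha)ab$, which equals $\alpha^2 ab$ because $\alpha^2+\alpha=0$.

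For the $V$-part, note that $P$ acts on $V$ as $-Q$. We have $-Q(u')=Q^2(u)=Q(u)=-u'$, and likewise $-Q(v')=-v'$. Hence the $V$-part is sent to
\[
(\alpha+1)(bu'+av')-av'-bu'=\alpha(bu'+av').
\]
This matches the left side term by term, so $P$ satisfies the weight-one identity.

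The main obstacle was already handled in the lemma, namely showing $v_0=0$. What remains here is bookkeeping; the only care needed is to use $\alpha^2=-\alpha$ and $Q^2=Q$ in the right places.
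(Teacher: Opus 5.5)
Your proof is correct and follows the paper's route. Necessity comes from Lemma~\ref{lem:wt1_form} with $Q=-L$, and sufficiency is a direct verification of the weight-one identity. You actually carry out that sufficiency computation explicitly, using $\alpha^2+\alpha=0$ for the scalar part and $Q^2=Q$ for the $V$-part, whereas the paper only asserts it.
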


\begin{proof}
From Lemma~\ref{lem:wt1_form} we have $P(1) = \alpha \in \{0,-1\}$ and $P|_V = L$ with $L^2+L=0$.  Define $Q = -L$; then $Q^2 = (-L)^2 = L^2 = -L = Q$, so $Q$ is idempotent.  Conversely, given $\alpha \in \{0,-1\}$ and an idempotent $Q$, set $P(a+u) = \alpha a - Q(u)$.  One verifies the weight‑one identity by a direct computation similar to the weight‑zero case, using that $V^2=0$ and the property $Q^2 = Q$.
\end{proof}

\begin{corollary}\label{cor:wt1geo}
The set $\mathcal{RB}_1$ of weight‑one Rota--Baxter operators on $R$ is isomorphic to the disjoint union of two copies of the variety of idempotent matrices in $M_n(K)$:
\[
\mathcal{RB}_1 \cong \bigl(\{0\} \times \mathcal{P}\bigr) \sqcup \bigl(\{-1\} \times \mathcal{P}\bigr),
\]
where $\mathcal{P} = \{ Q \in M_n(K) \mid Q^2 = Q\}$.  
\end{corollary}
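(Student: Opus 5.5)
The plan is to define an explicit map $\Phi:\mathcal{RB}_1\to\{0,-1\}\times\mathcal{P}$ and show it is a bijection given by polynomial maps in both directions. I would take $\Phi(P)=(P(1),\,-P|_V)$.

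First, $\Phi$ is well defined. By Lemma~\ref{lem:wt1_form}, every weight-one operator satisfies $P(1)=\alpha\in\{0,-1\}$ (a scalar, since $v_0=0$) and $P(V)\subseteq V$. Moreover $L=P|_V$ satisfies $L^2+L=0$, so $Q=-L$ is idempotent, exactly as in the proof of Theorem~\ref{thm:wt1}.

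Second, construct the inverse $\Psi(\alpha,Q)$ as the map $P(a+u)=\alpha a-Q(u)$. Theorem~\ref{thm:wt1} says this is a weight-one operator. Since that converse was only sketched there, I would check it directly with $x=a+u$ and $y=b+v$. The left side is $(\alpha a-Qu)(\alpha b-Qv)=\alpha^2ab-\alpha aQv-\alpha bQu$. For the right side, the argument of $P$ is
\[
xP(y)+P(x)y+xy=2\alpha ab+ab+a(v-Qv)+b(u-Qu),
\]
where all products inside $V$ vanish. Applying $P$ gives $(2\alpha+1)\alpha ab-aQ(v-Qv)-bQ(u-Qu)$.
\begin{itemize}
\item The vector parts agree because $Q(v-Qv)=0$, using $Q^2=Q$.
\item Comparing vector parts also requires $\alpha aQv$ on the left to match the $V$-component on the right. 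The right side has none, because $Q(v-Qv)=0$.
\end{itemize}
So the $V$-component forces $-\alpha(aQv+bQu)=0$. This holds when $\alpha=0$, but fails for $\alpha=-1$ unless $Q=0$.

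Before building the inverse, I would therefore recheck the $x=u$, $y=1$ computation in Lemma~\ref{lem:wt1_form}. There $L(u)\alpha$ on the left was equated with $(\alpha+1)L(u)+L^2(u)$, which gives only $L^2+L=0$. Combining this with a general $x=a+u$, $y=b+v$ may impose an extra cross condition such as $\alpha L=0$. This is the main obstacle: the scalar and vector components must be compared carefully, including the sign of $-1$ times $-Q$.

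If the check confirms $\Psi(\alpha,Q)$ is always a weight-one operator, the rest is formal. We have $\Phi\circ\Psi=\mathrm{id}$ because $P(1)=\alpha$ and $P|_V=-Q$. Also $\Psi\circ\Phi=\mathrm{id}$, since $P$ is determined by $P(1)$ and $P|_V$ via linearity. Both maps are linear in the matrix entries, so this is an isomorphism of varieties. The two pieces are disjoint because they are distinguished by the value $P(1)$.

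If instead the check fails for $\alpha=-1$, the plan is to identify exactly which pairs $(\alpha,Q)$ survive and state the corrected parametrization.
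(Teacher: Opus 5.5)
Your strategy, the mutually inverse maps $\Phi(P)=(P(1),-P|_V)$ and $\Psi(\alpha,Q)$, is exactly what the paper's one-line appeal to Theorem~\ref{thm:wt1} amounts to. The gap is that your direct check that $\Psi(\alpha,Q)$ is a weight-one operator contains an algebra error. As a result, the decisive step is left unresolved, and you wrongly suspect the statement fails at $\alpha=-1$. Expanding $xP(y)=(a+u)(\alpha b-Qv)$, you dropped the term $u\cdot\alpha b=\alpha b\,u$. Expanding $P(x)y=(\alpha a-Qu)(b+v)$, you dropped $\alpha a\,v$. The correct argument of $P$ is
\[
xP(y)+P(x)y+xy=(2\alpha+1)ab+a\bigl((1+\alpha)v-Qv\bigr)+b\bigl((1+\alpha)u-Qu\bigr).
\]
Applying $P$ and using $Q^2=Q$ gives $\alpha(2\alpha+1)ab-\alpha\,(aQv+bQu)$. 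This agrees with the left side $\alpha^2ab-\alpha\,(aQv+bQu)$: the vector parts are identical, and the scalar parts agree because $\alpha^2+\alpha=0$. So nothing fails for $\alpha=-1$. For example, $\alpha=-1$, $Q$ the identity of $V$ gives $P(x)=-x$ for all $x\in R$, which is a weight-one Rota--Baxter operator on any algebra. This contradicts your tentative conclusion that only $Q=0$ survives.

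Your worry that the case $x=u$, $y=1$ of Lemma~\ref{lem:wt1_form} might hide an extra condition such as $\alpha L=0$ is also unfounded. The Rota--Baxter identity is bilinear in $(x,y)$, so it suffices to check it on the pairs $(1,1)$, $(u,1)$, $(1,v)$ and $(u,v)$ with $u,v\in V$. These reduce, respectively, to $\alpha^2+\alpha=0$, to $L^2+L=0$ (twice), and to $0=0$. This is a cleaner way to verify the converse than expanding general elements. Once this check (or your corrected computation) is in place, the rest of your argument goes through and proves the corollary as stated: $\Phi\circ\Psi$ and $\Psi\circ\Phi$ are identities, both maps are linear in the coordinates, and the two pieces are disjoint by the value of $P(1)$. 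As written, though, the proposal does not establish the statement, because its key step is both miscomputed and left conditional.
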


\begin{proof}
The isomorphism is immediate from Theorem~\ref{thm:wt1}.
\end{proof}


\end{document}